\documentclass[12pt,reqno]{amsart}
\usepackage{amsaddr}
\usepackage{amssymb,latexsym,amsmath,epsfig,amsthm,mathrsfs}
\usepackage{rotating}
\usepackage{graphicx}
\usepackage{amssymb}
\usepackage{lineno}
\usepackage{enumitem}
\usepackage{cite}
\usepackage[top=3cm, bottom=3cm, left=3cm, right=3cm]{geometry}
\usepackage[usenames]{color}
\usepackage[colorlinks=true,
linkcolor=blue,
filecolor=blue,
citecolor=blue]{hyperref}

\usepackage{mathtools}


\makeatletter
\DeclarePairedDelimiterX{\pmodx}[1]{(}{)}{{\operator@font mod}\mkern6mu#1}
\renewcommand{\pmod}{%
  \allowbreak
  \if@display\mkern18mu\else\mkern8mu\fi
  \pmodx
}
\makeatother


 \newtheorem{theorem}{Theorem}[section]
 
 \newtheorem{lemma}[theorem]{Lemma}
 \theoremstyle{definition}
 
 \theoremstyle{remark}
 
 \theoremstyle{theorem}
 \newtheorem{problem}{Problem}[section]
 \theoremstyle{definition}

 \numberwithin{equation}{section}

\begin{document}

\title[The maximum size of sumsets in finite cyclic groups]{The maximum size of sumsets in finite cyclic groups}


\author[V Goswami]{Vivekanand Goswami}
\address{\em{\small Department of Mathematics, Indian Institute of Technology Bhilai, Durg – 491001, Chhattisgarh, India\\
email: vivekanandg@iitbhilai.ac.in}}


\author[R K Mistri]{Raj Kumar Mistri$^{*}$}
\address{\em{\small Department of Mathematics, Indian Institute of Technology Bhilai, Durg – 491001, Chhattisgarh, India\\
email: rkmistri@iitbhilai.ac.in}}

\thanks{$^{*}$Corresponding author}

\subjclass[2020]{Primary 11P70, 11B13; Secondary 11B75}

\keywords{sumsets, restricted sumsets, $h$-fold sumsets, restricted $h$-fold sumsets, size of sumsets, Sidon sets.}

\begin{abstract}
Let $A$ be a nonempty finite subset of an additive abelian group $G$. Given a nonnegative integer $h$, the $h$-fold sumset $hA$ is the set of all sums of $h$ elements of $A$, and the restricted $h$-fold sumset $h^\wedge A$ is the set of all sums of $h$ distinct elements of $A$. The union of restricted sumsets $s^\wedge A$, where $s=0, 1, \ldots, h$, is denoted by $[0, h]^\wedge A$. For fixed positive integers $m$ and $h$, the maximum size of the sumset $hA$ of  a set $A \subseteq G$ with $m$ elements is denoted by $\nu(G, m, h)$. In other words, $\nu(G, m, h) = \max\{|hA| : A \subseteq G, |A|= m\}$. Analogous quantities can be defined for the sumsets $h^\wedge A$ and $[0, h]^\wedge A$. Optimal upper bounds are known for these quantities. If $G$ is a finite cyclic group of order $n$, then each of these quantities agrees with the optimal upper bound, except in many cases. Bajnok posed the problem of determining all positive integers $n$, $m$, and $h$ for which the value of the function $f(n, m, h)$ is strictly less than the optimal upper bound. He posed similar problems for quantities related to the sumsets $h^\wedge A$ and $[0, h]^\wedge A$. We prove that, for any positive integer $h$, there are infinitely many positive integers $m$ and $n$ such that $\nu(\mathbb{Z}_n, m, h)$ is strictly less than the optimal upper bound. We also prove similar results for quantities related to the sumsets $h^\wedge A$ and $[0, h]^\wedge A$ also. These results provide the partial solutions to the problems posed by Bajnok.		
\end{abstract}

\maketitle


\section{Introduction}
Throughout the paper, let $\mathbb{Z}$ denote the set of integers. Let $\mathbb{N}$ denote the set of  positive integers, and  let $\mathbb{N}_0$ denote the set of  nonnegative integers. Let $a$ and $b$ be two integers such that $a \leq b$, then the set $\{n \in \mathbb{Z}: a \leq n \leq b\}$ is denoted by $[a, b]$. The cardinality of a set $A$ is denoted by  $|A|$. If $A$ is a subset of an additive abelian group $G$, and $g \in G$, then we define $A + g = \{a + g: a \in A\}$. For a positive integer $n$, let $\nu_2(n)$ denote the largest nonnegative integer $r$ such that $2^r$ divides $n$. For a real number $x$, let $\lfloor x \rfloor$ denote the greatest integer less than or equal to $x$. For an integer $n \in \mathbb{N}$, let $\mathbb{Z}_n$ denote the additive cyclic group of order $n$. 
	
Let $A = \{a_1, \dots, a_{m}\}$ be a nonempty finite subset of an additive abelian group $G$. Let $\Lambda\subseteq\mathbb{Z}$ and $H\subseteq\mathbb{N}_0$. Following the notations in \cite{bajnok2018}, the sumset of $A$ corresponding to $\Lambda$ and $H$, denoted by $H_{\Lambda} A$, is defined  as
\begin{equation}
	\begin{aligned}\label{h-lambda-sumset}
		H_{\Lambda} A = \{ \lambda_1 a_1 + \cdots + \lambda_m a_m : (\lambda_1, \dots, \lambda_m ) \in \Lambda^m (H)\},
	\end{aligned}
\end{equation}
where the index set $\Lambda^m (H)$ is defined as
\[\Lambda^m(H)=\{( \lambda_1,\dots,\lambda_m )\in \Lambda^m: |\lambda_1| + \cdots + |\lambda_m| \in H \}.\]
Corresponding to the sumset  $H_{\Lambda}A$, the function $\nu_\Lambda(G, m, H)$ is defined  as
\[\nu_\Lambda(G, m, H) = \max\{|H_{\Lambda}A|: A \subseteq G, |A|=m \}.\] 
It can be shown that if $G$ is a finite abelian group, then 
\begin{equation*}
  \nu_\Lambda(G, m, H) \leq \min \{|G|, |\Lambda^m(H)|\}.
\end{equation*}

Let $h$ be a nonnegative integer. We are interested in the following special cases of the sumset of $A$ defined in \eqref{h-lambda-sumset} and the corresponding function $\nu_\Lambda(G, m, H)$: 	
\begin{itemize}
  \item The {\it $h$-fold sumset $hA$} which correspond to $H = \{h\}$ and $\Lambda = [0, h]$. That is,
        \begin{equation}\label{h-fold-sumset}
           hA = \left\{\sum_{i=1}^{m}\lambda_i a_i : \lambda_1,\ldots, \lambda_m \in [0, h], \ \sum_{i = 1}^{m}\lambda_i = h\right\}.
        \end{equation}
  \item The {\it restricted $h$-fold sumset $h^\wedge A$} which correspond to $H = \{h\}$ and $\Lambda = [0, 1]$. That is,
        \begin{equation}\label{res-h-fold-sumset}
           h^\wedge A = \left\{\sum_{i=1}^{m}\lambda_i a_i : \lambda_1, \ldots, \lambda_m \in [0, 1], \ \sum_{i = 1}^{m}\lambda_i = h\right\}.
        \end{equation}
  \item The restricted $[0, h]$-fold sumset $[0 , h]^\wedge A$ which correspond to $H = [0, h]$ and $\Lambda = [0, 1]$. That is,
        \begin{equation}\label{res-sumsets-union}
           [0, h]^\wedge A = \left\{\sum_{i=1}^{m}\lambda_i a_i : \lambda_1, \ldots, \lambda_m \in [0, 1], \ \sum_{i = 1}^{m}\lambda_i \in [0,h]\right\}.
        \end{equation}
\end{itemize}

The study of the sumsets has a long history with {\em Cauchy-Davenport theorem}, which is due to Cauchy \cite{cauchy, dav1, dav2} and is the earliest known result in the field of additive combinatorics which states that if $A$ is a nonempty subsets of $\Bbb Z_p$, then $|hA| \geq \min(p, h|A| - h + 1)$ for all positive integers $h$. The corresponding theorem for the restricted $h$-fold sumset $h^{\wedge}A$ in $\Bbb Z_p$ is due to Dias da Silva and Hamidoune (see \cite{dias}; see also \cite{alon1, alon2}) which states that if $A$ is a nonempty subset of $\Bbb Z_p$, then $|h^{\wedge}A| \geq \min(p, h|A| - h^2 + 1)$ for all positive integers $h \leq |A|$. These sumsets, as well as other kind of sumsets, have been studied extensively in the litearture (see \cite{nath, tao, freiman, mann} and the references given therein). 

One of the important problems in this field is estimating the maximum possible size of sumsets. Corresponding to the sumsets \eqref{h-fold-sumset}, \eqref{res-h-fold-sumset} and \eqref{res-sumsets-union}, the function $\nu_\Lambda(G, m, H)$ is denoted by $\nu(G, m, h)$, $\nu ^\wedge(G, m, h)$ and $\nu^\wedge(G, m, [0, h])$, respectively (see \cite{bajnok2018}). That is,
	
\begin{equation*}
\begin{aligned}
&\nu(G, m, h)	= \max\{|hA| : A \subseteq G, |A|= m\},\\
&\nu^\wedge(G, m, h) = \max\{|h^\wedge A| : A \subseteq G, |A|= m\}, \\
\text{and}\\
&\nu^\wedge(G, m, [0,h]) = \max\{|[0, h]^\wedge A| : A \subseteq G, |A| = m\}. 			
\end{aligned}
\end{equation*}
There is a strong connection between these functions and several other quantities related to Sidon sets and various kinds of sumsets (see, for example, \cite[Chapter C]{bajnok2018}).

For integers $n$ and $r$, let $\binom{n}{r} = \frac{n!}{r! (n-r)!}$ if $0 \leq r \leq n$, and $\binom{n}{r} = 0$ if either $r > n$ or $r < 0$. We have following trivial upper bounds for these functions.
\begin{theorem}[See {\cite[p. 28]{bajnok2018}}]\label{max-sumset-thm1}
Let $m$ and $h$ be positive integers, and let $G$ be an additive abelian group. Then
\begin{enumerate}
  \item \[\nu(G, m, h) \leq \binom{m+h-1}{h}.\]
  \item \[\nu^\wedge(G, m, h) \leq \binom{m}{h}.\]
  \item \[\nu^\wedge(G, m, [0, h]) \leq \sum\limits_{i=0}^{h}\binom{m}{i}.\]
\end{enumerate}
\end{theorem}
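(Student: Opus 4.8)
The plan is to prove all three bounds by the same elementary principle: each of the three sumsets is the image of a finite index set under an evaluation map into $G$, hence its cardinality is at most the cardinality of that index set, and it then remains only to count the index set in each case. This is precisely the computation of $|\Lambda^m(H)|$ in the inequality $\nu_\Lambda(G, m, H) \leq \min\{|G|, |\Lambda^m(H)|\}$ recalled above, specialised to the three choices of $(\Lambda, H)$ that define $hA$, $h^\wedge A$, and $[0,h]^\wedge A$.

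For part (1), fix $A = \{a_1, \ldots, a_m\} \subseteq G$ with $|A| = m$. By \eqref{h-fold-sumset}, every element of $hA$ has the form $\sum_{i=1}^m \lambda_i a_i$ with $(\lambda_1, \ldots, \lambda_m) \in \mathbb{N}_0^m$ and $\sum_{i=1}^m \lambda_i = h$, so $|hA|$ is at most the number of such tuples. By the standard stars-and-bars count, the number of solutions of $\lambda_1 + \cdots + \lambda_m = h$ in nonnegative integers is $\binom{m+h-1}{m-1} = \binom{m+h-1}{h}$. Taking the maximum over all $m$-element subsets $A$ yields $\nu(G, m, h) \leq \binom{m+h-1}{h}$.

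For part (2), by \eqref{res-h-fold-sumset} every element of $h^\wedge A$ has the form $\sum_{i \in S} a_i$ for some $S \subseteq [1, m]$ with $|S| = h$, namely $S = \{i : \lambda_i = 1\}$; hence $|h^\wedge A|$ is at most the number $\binom{m}{h}$ of such subsets, which is consistent with the convention $\binom{m}{h} = 0$ when $h > m$ (in which case $h^\wedge A = \emptyset$). Likewise, for part (3), by \eqref{res-sumsets-union} every element of $[0, h]^\wedge A$ has the form $\sum_{i \in S} a_i$ for some $S \subseteq [1, m]$ with $|S| \leq h$, so $|[0, h]^\wedge A| \leq \sum_{i=0}^{h} \binom{m}{i}$; equivalently one notes $[0,h]^\wedge A = \bigcup_{s=0}^{h} s^\wedge A$ and applies part (2) termwise. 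In both cases taking the maximum over $A$ gives the stated bounds.

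There is no genuine obstacle here — the argument is pure bookkeeping. The only points needing a line of care are the three bijections underlying the counts (weak compositions of $h$ into $m$ parts for (1), $h$-subsets of an $m$-set for (2), and subsets of size at most $h$ for (3)), together with the boundary conventions for $\binom{n}{r}$ when $r > n$ or $r < 0$, which make the statements valid without case distinctions on the relative sizes of $m$ and $h$. These inequalities are, of course, typically far from tight over a general finite abelian group $G$ — understanding exactly when they fail to be attained for $G = \mathbb{Z}_n$ is the purpose of the present paper.
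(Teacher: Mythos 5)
Your proof is correct and is exactly the standard argument this result rests on: the paper itself states this theorem as a citation to Bajnok (without proof), and the bound follows immediately from the observation $\nu_\Lambda(G,m,H)\le|\Lambda^m(H)|$ together with the three counts of $|\Lambda^m(H)|$ that the paper records in Lemma~\ref{max-sumset-lem1}. Your stars-and-bars and subset counts, and the handling of the convention $\binom{m}{h}=0$ for $h>m$, are all accurate.
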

	
If $G$ is finite abelian group of order $n$, then following upper bounds are known \cite{bajnok2018}.
	
\begin{theorem}[See {\cite[Proposition A.3]{bajnok2018}}] \label{max-sumset-thm2}
Let $G$ be an finite additive abelian group of order $n$. Then
\[\nu(G, m, h) \leq \min\ \Biggl\{n,\binom{m+h-1}{h}\Biggr\}.\]			
\end{theorem}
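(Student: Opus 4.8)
The plan is to combine the group-independent bound of Theorem~\ref{max-sumset-thm1}(1) with the trivial observation that $hA$ is a subset of $G$. Since $\nu(G,m,h)$ is by definition the maximum of $|hA|$ over all $A\subseteq G$ with $|A|=m$, it suffices to fix one such $A$ and show $|hA|\le\min\{n,\binom{m+h-1}{h}\}$; the bound on $\nu(G,m,h)$ then follows by taking the maximum.

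First I would recall from \eqref{h-fold-sumset} that every element of $hA$ has the form $\sum_{i=1}^m\lambda_i a_i$ with $(\lambda_1,\dots,\lambda_m)\in[0,h]^m$ and $\sum_{i=1}^m\lambda_i=h$. The set of admissible coefficient vectors is precisely the set of ways of writing $h$ as an ordered sum of $m$ nonnegative integers, which has cardinality $\binom{m+h-1}{h}$ by a standard stars-and-bars count. As $hA$ is the image of this finite index set under the evaluation map $(\lambda_1,\dots,\lambda_m)\mapsto\sum_i\lambda_i a_i$, we obtain $|hA|\le\binom{m+h-1}{h}$; this is exactly the content of Theorem~\ref{max-sumset-thm1}(1), which I can invoke directly rather than reprove.

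Second, since $hA\subseteq G$ and $|G|=n$, we have $|hA|\le n$. Combining the two inequalities gives $|hA|\le\min\{n,\binom{m+h-1}{h}\}$ for every $A\subseteq G$ with $|A|=m$, and taking the maximum over all such $A$ yields the stated bound. There is essentially no obstacle here: the only point to keep in mind is that the evaluation map above need not be injective, so distinct coefficient vectors may collapse to the same group element — but this can only decrease $|hA|$ relative to the count $\binom{m+h-1}{h}$, so it does not affect the argument.
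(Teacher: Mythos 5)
Your proof is correct and is exactly the standard argument: the paper itself does not prove this statement but cites it from Bajnok (Proposition A.3), and the intended justification is precisely the combination of $hA\subseteq G$ (giving $|hA|\le n$) with the stars-and-bars count of coefficient vectors from Theorem~\ref{max-sumset-thm1}(1). No issues.
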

		
\begin{theorem}[See {\cite[Proposition A.27]{bajnok2018}}]\label{max-sumset-thm3}
Let $G$ be an finite additive abelian group of order $n$. Then
\[\nu^\wedge(G, m, h) \leq \min \Biggl\{n,\binom{m}{h}\Biggl\}.\]			
\end{theorem}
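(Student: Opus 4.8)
The plan is to deduce this bound by intersecting the two obvious constraints on the size of $h^\wedge A$. First I would observe the group-theoretic constraint: for every subset $A \subseteq G$ with $|A| = m$, the restricted sumset $h^\wedge A$ is by definition a subset of $G$, so $|h^\wedge A| \leq |G| = n$. This holds with no hypothesis on $m$ or $h$ beyond what is assumed.

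Next I would invoke the combinatorial constraint already recorded as Theorem~\ref{max-sumset-thm1}(2): since every element of $h^\wedge A$ is the sum of $h$ \emph{distinct} elements of $A$, and there are exactly $\binom{m}{h}$ such $h$-element subsets of $A$ (interpreting $\binom{m}{h}$ as $0$ when $h > m$, in which case $h^\wedge A = \varnothing$), the map sending an $h$-element subset to the sum of its members surjects onto $h^\wedge A$; hence $|h^\wedge A| \leq \binom{m}{h}$. Thus for every admissible $A$ we have $|h^\wedge A| \leq \min\{n, \binom{m}{h}\}$, and taking the maximum over all $A \subseteq G$ with $|A| = m$ yields $\nu^\wedge(G, m, h) \leq \min\{n, \binom{m}{h}\}$, as claimed.

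There is essentially no obstacle here: the statement is a direct packaging of a trivial containment together with the counting bound of Theorem~\ref{max-sumset-thm1}(2), exactly paralleling how Theorem~\ref{max-sumset-thm2} packages Theorem~\ref{max-sumset-thm1}(1) with the bound $|hA| \leq |G|$. The only point worth a word of care is the degenerate range $h > m$, where one should note that the empty restricted sumset and the convention $\binom{m}{h} = 0$ are consistent, so the inequality remains valid (and trivially tight) there as well.
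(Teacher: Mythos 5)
Your argument is correct: the bound is exactly the intersection of the trivial containment $h^\wedge A \subseteq G$ with the counting bound of Theorem~\ref{max-sumset-thm1}(2), followed by taking the maximum over all $m$-element subsets $A$. The paper states this result without proof (citing Bajnok), and your reasoning is precisely the standard argument behind that citation, including the correct handling of the degenerate case $h > m$.
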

		
\begin{theorem}[See {\cite[Proposition A.32]{bajnok2018}}]\label{max-sumset-thm4}
Let $G$ be an finite additive abelian group of order $n$. Then
\[\nu^\wedge(G, m, [0, h]) \leq \min \Biggl\{n,\sum\limits_{i=0}^{h}\binom{m}{i}\Biggl\}.\]
\end{theorem}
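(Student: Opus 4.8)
The plan is to prove the two bounds appearing inside the minimum separately, for an arbitrary fixed subset $A \subseteq G$ with $|A| = m$, and then take the maximum over all such $A$. For the bound by $n$ this is immediate: by \eqref{res-sumsets-union} every element of $[0,h]^\wedge A$ is a sum $\sum_{i=1}^m \lambda_i a_i$ with each $\lambda_i \in \{0,1\}$, hence an element of $G$, so $[0,h]^\wedge A \subseteq G$ and therefore $|[0,h]^\wedge A| \leq |G| = n$.

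For the combinatorial bound I would use the decomposition $[0,h]^\wedge A = \bigcup_{s=0}^h s^\wedge A$, which follows at once from \eqref{res-sumsets-union}. For each $s \in \{0,1,\dots,h\}$ the set $s^\wedge A$ is the image of the map sending an $s$-element subset $\{a_{i_1},\dots,a_{i_s}\} \subseteq A$ to $a_{i_1} + \cdots + a_{i_s}$; since there are exactly $\binom{m}{s}$ such subsets, $|s^\wedge A| \leq \binom{m}{s}$. Subadditivity of cardinality over a finite union then gives $|[0,h]^\wedge A| \leq \sum_{s=0}^h \binom{m}{s}$. Alternatively, this second bound is precisely Theorem~\ref{max-sumset-thm1}(3), which holds for an arbitrary abelian group $G$ and may simply be quoted.

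Combining the two inequalities yields $|[0,h]^\wedge A| \leq \min\{n,\sum_{i=0}^h \binom{m}{i}\}$ for every admissible $A$, and taking the maximum over $A$ gives the asserted bound on $\nu^\wedge(G,m,[0,h])$. There is no genuine obstacle here: the argument is bookkeeping, and the only point worth stating carefully is that both constituent bounds hold for \emph{every} $A$ simultaneously, so their minimum does too, and passing to the maximum over $A$ preserves the inequality. One may also observe that the statement is nothing but the specialization to $\Lambda=[0,1]$ and $H=[0,h]$ of the general inequality $\nu_\Lambda(G,m,H) \leq \min\{|G|,|\Lambda^m(H)|\}$ recorded in the introduction, once one notes that $|[0,1]^m([0,h])| = \sum_{i=0}^h \binom{m}{i}$.
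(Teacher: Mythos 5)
Your argument is correct, and it is exactly the standard one: the paper does not prove this statement itself (it cites Bajnok, Proposition A.32), but your proof coincides with the general inequality $\nu_\Lambda(G,m,H) \leq \min\{|G|, |\Lambda^m(H)|\}$ recorded in the introduction, specialized via Lemma~\ref{max-sumset-lem1} to $\Lambda = [0,1]$ and $H = [0,h]$. Nothing further is needed.
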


In case of finite cyclic groups $\mathbb{Z}_n$, the equality holds in Theorem \ref{max-sumset-thm2}, Theorem \ref{max-sumset-thm3} and Theorem \ref{max-sumset-thm4} with many exceptions as observed by Manandhar in \cite{manandhar2012} for some values of $n$. Bajnok posed the following problems:
	
\begin{problem}[{\cite[Problem A.8]{bajnok2018}}]\label{max-sumset-prob1}
Determine all (or, at least, infinitely many) positive integers $n$, $m$ and  $h$, for which 
\[\nu(\mathbb{Z}_n, m, h) < \min \Biggl\{n,\binom{m+h-1}{h}\Biggl\}.\]
\end{problem}
	
\begin{problem}[{\cite[Problem A.31]{bajnok2018}}]\label{max-sumset-prob2}
Determine all (or, at least, infinitely many) positive integers $n$, $m$ and  $h$, for which 
\[\nu^\wedge(\mathbb{Z}_n, m, h) < \min \Biggl\{n,\binom{m}{h}\Biggl\}.\]
\end{problem}
		
\begin{problem}[{\cite[Problem A.34]{bajnok2018}}]\label{max-sumset-prob3}
Determine all (or, at least, infinitely many) positive integers $n$, $m$ and  $h$, for which 
\[\nu^\wedge(\mathbb{Z}_n, m, [0, h]) < \min \Biggl\{n,\sum\limits_{i=0}^{h}\binom{m}{i}\Biggl\}.\]
\end{problem}

In this paper, we prove that for any positive integer $h$, there exist infinitely many positive integers $m$ and $n$ such that the strict inequalities holds in Theorem \ref{max-sumset-thm2}, Theorem \ref{max-sumset-thm3} and Theorem \ref{max-sumset-thm4}. These theorems provide the partial solutions of Problem \ref{max-sumset-prob1}, Problem \ref{max-sumset-prob2} and Problem \ref{max-sumset-prob3} posed by Bajnok. More precisely, we prove the following theorems:
	
\begin{theorem}\label{max-sumset-thm5} 
Let $k$ be an arbitrary positive integer. Let $h\geq2$ be a positive integer. If $m = 2^{\lfloor\log_2 h\rfloor+1}k - 1$ and
$n = \binom{m +  h - 1}{h}$, then 
\[\nu(\mathbb{Z}_n, m, h) < \min\Biggl\{n,\binom{m+h-1}{h}\Biggl\}.\]
\end{theorem}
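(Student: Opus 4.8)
The plan is to argue by contradiction. Since $n=\binom{m+h-1}{h}$, Theorem~\ref{max-sumset-thm2} already gives $\nu(\mathbb{Z}_n,m,h)\le\min\{n,\binom{m+h-1}{h}\}=n$, so it suffices to rule out the existence of a set $A=\{a_1,\dots,a_m\}\subseteq\mathbb{Z}_n$ with $|hA|=n$. The first step is to observe that such an $A$ would be ``perfect'': the number of tuples $\lambda=(\lambda_1,\dots,\lambda_m)\in\mathbb{N}_0^m$ with $\lambda_1+\cdots+\lambda_m=h$ is exactly $\binom{m+h-1}{h}=n$, and $hA$ is the image of the map $\lambda\mapsto\lambda_1a_1+\cdots+\lambda_ma_m$ from this set of tuples into $\mathbb{Z}_n$; if $|hA|=n$, then this map between two $n$-element sets is onto, hence a bijection. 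Consequently the representation function $r(s):=\#\{\lambda\in\mathbb{N}_0^m:\sum_i\lambda_i=h,\ \sum_i\lambda_ia_i\equiv s\pmod{n}\}$ would equal $1$ for every $s\in\mathbb{Z}_n$.

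The second step is to show that $n$ is even; this is where the hypothesis on $m$ enters. By Kummer's theorem, $\nu_2\binom{m+h-1}{h}$ equals the number of carries produced when $m-1$ and $h$ are added in base $2$. Put $t=\lfloor\log_2h\rfloor$, so that $2^t\le h\le 2^{t+1}-1$ and the digit of $h$ in position $t$ of its binary expansion is $1$; note $t\ge1$ since $h\ge2$. Since $m-1=2^{t+1}k-2$, a short borrow computation shows that in the binary expansion of $m-1$ all digits in positions $1$ through $t+\nu_2(k)$ equal $1$, so in particular the digit of $m-1$ in position $t$ is $1$. Hence in the addition $(m-1)+h$ both summands contribute a $1$ in position $t$, forcing at least one carry; therefore $\nu_2(n)\ge1$ and $n$ is even. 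In particular $-1$ is an $n$-th root of unity, so $x\mapsto(-1)^x$ is well defined on $\mathbb{Z}_n$.

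The third step is to test ``perfectness'' against this sign character. Multiplying $r(s)=1$ by $(-1)^s$ and summing over $s\in\mathbb{Z}_n$ gives, on one hand, $\sum_{s=0}^{n-1}(-1)^s=0$ because $n$ is even; on the other hand, regrouping by the tuple $\lambda$ gives $\sum_{\lambda}(-1)^{\lambda_1a_1+\cdots+\lambda_ma_m}=\sum_{\lambda}\prod_{i=1}^m\varepsilon_i^{\lambda_i}$, where $\varepsilon_i=(-1)^{a_i}\in\{\pm1\}$. This last sum is the coefficient of $z^h$ in $\prod_{i=1}^m(1-z\varepsilon_i)^{-1}=(1-z)^{-p}(1+z)^{-q}$, where $p$ is the number of even $a_i$ and $q=m-p$. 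Because $m=2^{t+1}k-1$ is odd, $p+q$ is odd, so $p\ne q$. If $p>q$, then $(1-z)^{-p}(1+z)^{-q}=(1-z^2)^{-q}(1-z)^{-(p-q)}$ has nonnegative coefficients, and its coefficient of $z^h$ is at least $\binom{(p-q)+h-1}{h}\ge1$; if $q>p$, substituting $z\mapsto-z$ and running the same computation shows the coefficient of $z^h$ equals $(-1)^h$ times a positive integer. Either way this coefficient is nonzero, contradicting that the sum equals $0$. Hence no such $A$ exists, and $\nu(\mathbb{Z}_n,m,h)<n=\min\{n,\binom{m+h-1}{h}\}$.

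The conceptual core of the argument is the first step — recognizing that an extremal $A$ must realize each element of $\mathbb{Z}_n$ exactly once as an $h$-fold sum, so that the problem collapses to evaluating a single symmetric expression at roots of unity — together with the observation that the single root $-1$ already suffices once one knows $m$ is odd. The only genuine computation is the $2$-adic estimate in the second step, and the key point there, that position $t$ carries a $1$ in both $h$ and $m-1$, is exactly what the divisibility $2^{\lfloor\log_2h\rfloor+1}\mid m+1$ is engineered to force; I expect this $2$-adic bookkeeping to be the only place demanding care.
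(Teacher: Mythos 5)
Your proposal is correct, and it reaches the conclusion by a genuinely different route from the paper in both of its nontrivial steps. For the evenness of $n$ you invoke Kummer's theorem and a binary-digit computation showing that $m-1$ and $h$ both have a $1$ in position $t=\lfloor\log_2 h\rfloor$; the paper instead extracts the factor $m+1=2^{t+1}k$ from the product formula for $\binom{m+h-1}{h}$ and compares $2$-adic valuations directly. Both are sound. The real divergence is in the contradiction itself: the paper develops two auxiliary lemmas (an inductive ``propagation of maximality'' from $hA$ to $(h-1)A$, and the identity $\sum_{a\in hA}a=\binom{m+h-1}{h-1}(a_1+\cdots+a_m)$ valid when $|hA|$ is maximal), then compares this with $\sum_{s\in\mathbb{Z}_n}s\equiv n/2\pmod n$ and manipulates the resulting congruence into $2hx\equiv m\pmod{2m}$, forcing $m$ even. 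You instead test the bijectivity of $\lambda\mapsto\sum\lambda_ia_i$ against the order-two character $s\mapsto(-1)^s$ (well defined since $n$ is even): the sum over $\mathbb{Z}_n$ vanishes, while the tuple-side sum is $[z^h]\,(1-z)^{-p}(1+z)^{-q}$, which is nonzero because $m=p+q$ odd forces $p\ne q$ and the rewriting $(1-z^2)^{-\min(p,q)}(1\mp z)^{-|p-q|}$ has, up to sign, positive $z^h$-coefficient. Your version is shorter and avoids the two lemmas entirely; the paper's lemmas, however, are reused almost verbatim for the restricted-sumset Theorems 3.6 and 4.2 (where the analogous generating function $(1+z)^p(1-z)^q$ can have vanishing coefficients, so your character argument would not transfer as cleanly). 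Both proofs ultimately prove the same underlying statement: if $m$ is odd and $\binom{m+h-1}{h}$ is even, the maximum is not attained.
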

	
\begin{theorem}\label{max-sumset-thm6} 
Let $k$ be an arbitrary positive integer. Let $h\geq2$ be a positive integer. If $m = 2^{\lfloor\log_2 h\rfloor+1}k + 1$ and $n = \binom{m}{h}$, then 
\begin{equation}\label{max-sumset-thm6-eq1}
  \nu^\wedge(\mathbb{Z}_n, m, h) < \min\Biggl\{n,\binom{m}{h}\Biggl\},
\end{equation} 
and 
\begin{equation}\label{max-sumset-thm6-eq2}
 \nu^\wedge(\mathbb{Z}_n, m, m-h) < \min\Biggl\{n,\binom{m}{m-h}\Biggl\}.
\end{equation} 
\end{theorem}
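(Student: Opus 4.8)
The plan is to argue by contradiction. Since $n=\binom{m}{h}=\binom{m}{m-h}$, we have $\min\{n,\binom{m}{h}\}=\min\{n,\binom{m}{m-h}\}=n$, so in view of Theorem \ref{max-sumset-thm3} it suffices to show that no $A\subseteq\mathbb{Z}_n$ with $|A|=m$ satisfies $|h^{\wedge}A|=n$; the inequality \eqref{max-sumset-thm6-eq2} will then follow by complementation. The first, and decisive, step is to observe that $n$ is even. Indeed, put $t=\lfloor\log_2 h\rfloor+1$, so that $2^{t-1}\le h\le 2^t-1$ and $t\ge 2$ (because $h\ge 2$); in particular $h<2^t+1\le m$. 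The binary digit of $h$ in position $t-1$ equals $1$, whereas $m=2^t k+1\equiv 1\pmod{2^t}$ has binary digit $0$ in position $t-1$, and therefore $\binom{m}{h}$ is even by Lucas' theorem.

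Next, suppose for contradiction that $A=\{a_1,\dots,a_m\}\subseteq\mathbb{Z}_n$ has $|A|=m$ and $|h^{\wedge}A|=n$. Since $h^{\wedge}A$ is a subset of $\mathbb{Z}_n$ of size $n$, we get $h^{\wedge}A=\mathbb{Z}_n$, and moreover the $\binom{m}{h}$ group elements $\sigma(S):=\sum_{a\in S}a$, taken over all $S\subseteq A$ with $|S|=h$, are pairwise distinct. Computing the sum of all elements of $h^{\wedge}A$ in two ways, we obtain, in $\mathbb{Z}_n$,
\[
\binom{m-1}{h-1}\,\sigma(A)\;=\;\sum_{S\subseteq A,\ |S|=h}\sigma(S)\;=\;\sum_{x\in h^{\wedge}A}x\;=\;\sum_{x\in\mathbb{Z}_n}x\;\equiv\;\frac{n(n-1)}{2}\;\equiv\;\frac{n}{2}\pmod{n},
\]
where the first equality holds because every $a\in A$ lies in exactly $\binom{m-1}{h-1}$ of the $h$-element subsets of $A$, the third equality uses that the sums $\sigma(S)$ are pairwise distinct, and the last congruence uses that $n$ is even. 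Choosing an integer representative of $\sigma(A)$, this says $\binom{m-1}{h-1}\sigma(A)-\tfrac{n}{2}=n\ell$ for some $\ell\in\mathbb{Z}$, and the idea is now to compare the $2$-adic valuations of the two sides.

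From the identity $m\binom{m-1}{h-1}=h\binom{m}{h}=hn$ and the fact that $m=2^t k+1$ is odd, it follows that $\nu_2\!\left(\binom{m-1}{h-1}\right)=\nu_2(h)+\nu_2(n)\ge\nu_2(n)$. Hence the left-hand side $\binom{m-1}{h-1}\sigma(A)-\tfrac{n}{2}$ is the difference of an integer of $2$-adic valuation at least $\nu_2(n)$ and the integer $\tfrac{n}{2}$, which has valuation exactly $\nu_2(n)-1$; so its valuation is exactly $\nu_2(n)-1$, whereas $n\ell$ has valuation at least $\nu_2(n)$. This contradiction establishes \eqref{max-sumset-thm6-eq1}. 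For \eqref{max-sumset-thm6-eq2}, the map $S\mapsto A\setminus S$ is a bijection from the $h$-element subsets onto the $(m-h)$-element subsets of $A$, and $\sigma(A\setminus S)=\sigma(A)-\sigma(S)$, so $(m-h)^{\wedge}A=\sigma(A)-h^{\wedge}A$ and hence $|(m-h)^{\wedge}A|=|h^{\wedge}A|$ for every $A$ with $|A|=m$; taking maxima gives $\nu^{\wedge}(\mathbb{Z}_n,m,m-h)=\nu^{\wedge}(\mathbb{Z}_n,m,h)<n=\min\{n,\binom{m}{m-h}\}$. The entire argument hinges on the parity of $n$, which is exactly what the arithmetic shape $m=2^{\lfloor\log_2 h\rfloor+1}k+1$ is designed to guarantee; the remaining steps are routine bookkeeping with binomial coefficients and $2$-adic valuations.
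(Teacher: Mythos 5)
Your proof is correct, and while it follows the same overall strategy as the paper (contradiction via computing $\sum_{x\in\mathbb{Z}_n}x$ in two ways, hinging on the parity of $n$), the execution differs at three points, each of which is a legitimate and in some respects cleaner alternative. First, you establish that $n=\binom{m}{h}$ is even by Lucas' theorem, using that the binary digit of $h$ in position $t-1$ is $1$ while that of $m\equiv 1\pmod{2^t}$ is $0$; the paper instead factors $\binom{m}{h}$ explicitly and compares $\nu_2$ of numerator and denominator. Second, you obtain the identity $\sum_{x\in h^{\wedge}A}x=\binom{m-1}{h-1}\sigma(A)$ by a direct double count over $h$-element subsets, using only that $|h^{\wedge}A|=\binom{m}{h}$ forces $S\mapsto\sigma(S)$ to be a bijection; this replaces the paper's two-lemma chain (Lemma \ref{max-sumset-lem3} and Lemma \ref{max-sumset-lem5}) and is noticeably more economical. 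Third, from the congruence $\binom{m-1}{h-1}\sigma(A)\equiv\frac{n}{2}\pmod n$ you extract the contradiction by comparing $2$-adic valuations via $m\binom{m-1}{h-1}=hn$ with $m$ odd, so that the left side has valuation at least $\nu_2(n)$ while $\frac n2$ has valuation exactly $\nu_2(n)-1$; the paper instead multiplies through by $2(h!)$, cancels the common factor $(m-1)\cdots(m-h+1)$, and reads off $m\equiv 0\pmod 2$ against the oddness of $m$. These are two different ways of exploiting the same arithmetic fact. Your handling of \eqref{max-sumset-thm6-eq2} by complementation $(m-h)^{\wedge}A=\sigma(A)-h^{\wedge}A$ matches the fact the paper cites. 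All steps check out, including the verification that $h<m$ and that $t\ge2$ so the Lucas argument applies.
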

	
\begin{theorem}\label{max-sumset-thm7} 
Let $k$ be an arbitrary nonnegative integer. Let $h \geq 2$ be a positive integer. If $m =  2^{\lfloor\log_2 h\rfloor}(4 k \ + \ 3)$ and $n = \sum\limits_{i=0}^{h}\binom{m}{i}$, then 
\[\nu^\wedge(\mathbb{Z}_n, m, [0, h]) < \min\Biggl\{n, \sum\limits_{i=0}^{h}\binom{m}{i}\Biggl\}.\]
\end{theorem}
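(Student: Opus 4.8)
\emph{Proof proposal.} Since $n=\sum_{i=0}^{h}\binom{m}{i}$, the right-hand minimum equals $n$, so it suffices to show $|[0,h]^{\wedge}A|<n$ for \emph{every} $A\subseteq\mathbb{Z}_n$ with $|A|=m$. Let $\mathcal{S}$ be the family of subsets of $A$ of size at most $h$; then $|\mathcal{S}|=\sum_{i=0}^{h}\binom{m}{i}=n$, and the summation map $\Sigma\colon\mathcal{S}\to\mathbb{Z}_n$, $\Sigma(S)=\sum_{a\in S}a$, has image $[0,h]^{\wedge}A$. Because $|\mathcal{S}|=|\mathbb{Z}_n|=n$, the image has size $n$ if and only if $\Sigma$ is a bijection. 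I would therefore assume, for contradiction, that $\Sigma$ is a bijection for some $A$, and derive a parity contradiction; this yields $|[0,h]^{\wedge}A|\le n-1<n$ for all admissible $A$.

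The engine is a first-moment (double-counting) identity. If $\Sigma$ is a bijection then $\sum_{S\in\mathcal{S}}\Sigma(S)=\sum_{g\in\mathbb{Z}_n}g=\tfrac{n(n-1)}{2}$ in $\mathbb{Z}_n$. On the other hand, interchanging summations, $\sum_{S\in\mathcal{S}}\sum_{a\in S}a=\sum_{a\in A}a\cdot|\{S\in\mathcal{S}:a\in S\}|=c\,\sigma$, where $\sigma=\sum_{a\in A}a$ and $c=\sum_{j=0}^{h-1}\binom{m-1}{j}$ is the number of members of $\mathcal{S}$ containing a fixed element. Hence $c\,\sigma=\tfrac{n(n-1)}{2}$ in $\mathbb{Z}_n$. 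The plan is to prove that, for the prescribed $m,n$, we have $2\mid n$, $2\mid c$, and $\tfrac{n(n-1)}{2}$ is odd; reducing the displayed identity modulo $2$ (legitimate since $2\mid n$) then gives $0\equiv c\,\sigma\equiv\tfrac{n(n-1)}{2}\equiv 1\pmod 2$, the desired contradiction.

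It remains to verify the three arithmetical facts; all are instances of Kummer's and Lucas' theorems for $\binom{a}{b}$ modulo powers of $2$. Write $r=\lfloor\log_2 h\rfloor$, so $2^r\le h<2^{r+1}$ and $r\ge 1$ (as $h\ge 2$); the hypothesis $m=2^{r}(4k+3)$ gives $\nu_2(m)=r$ with $u:=m/2^{r}\equiv 3\pmod 4$. \emph{(i)} $\binom{m-1}{j}$ is odd exactly for $0\le j\le 2^{r}-1$ (the binary digits of $m-1$ in positions $0,\dots,r$ are $1$ in positions $0,\dots,r-1$ and $0$ in position $r$), so $c$ has exactly $2^{r}$ odd terms, hence $c$ is even. \emph{(ii)} For $0\le i\le h$, $\binom{m}{i}$ is odd exactly for $i\in\{0,2^{r}\}$. \emph{(iii)} $\binom{m}{2^{r}}\equiv 3\pmod 4$: from $(1+x)^{2^{r}}\equiv(1+x^{2^{r-1}})^{2}\pmod 4$ one gets $(1+x)^{m}\equiv(1+x^{2^{r-1}})^{2u}\pmod 4$, and extracting the coefficient of $x^{2^{r}}$ gives $\binom{m}{2^{r}}\equiv\binom{2u}{2}\equiv 2u^{2}-u\equiv 2-u\equiv 3\pmod 4$; moreover, among the even coefficients $\binom{m}{i}$ with $0\le i\le h$, exactly one (namely $i=2^{r-1}$) is $\equiv 2\pmod 4$ and all others are $\equiv 0\pmod 4$, using $\nu_2\binom{m}{i}=\nu_2\binom{2^{r}}{i}=r-\nu_2(i)$ for $0<i<2^{r}$ and $\nu_2\binom{m}{i}=\nu_2\binom{2^{r+1}}{\,i-2^{r}}\ge 2$ for $2^{r}<i\le h$. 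Summing, $n=\sum_{i=0}^{h}\binom{m}{i}\equiv(1+3)+2\equiv 2\pmod 4$, so $\tfrac{n(n-1)}{2}=\tfrac{n}{2}(n-1)$ is a product of two odd integers and hence odd.

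The combinatorial step (the identity $\sum_{S\in\mathcal{S}}\Sigma(S)=c\,\sigma$) is routine; the real work, and the main obstacle, is the $2$-adic bookkeeping in the last paragraph, whose sharpest point is the congruence $\binom{2^{r}u}{2^{r}}\equiv 3\pmod 4$ for $u\equiv 3\pmod 4$ — this is precisely where the shape $4k+3$ (rather than an arbitrary odd factor) is needed, and it is what pushes $n$ from $\equiv 0$ to $\equiv 2\pmod 4$. I would also check separately the degenerate ranges $r=1$ and $h=2^{r}$ (where the index set $\{i:2^{r}<i\le h\}$ is empty), but these present no difficulty.
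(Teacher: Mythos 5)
Your proposal is correct and follows essentially the same route as the paper: assume $[0,h]^{\wedge}A=\mathbb{Z}_n$, double-count $\sum_{a\in[0,h]^{\wedge}A}a$ to get $c\,\sigma\equiv\frac{n(n-1)}{2}\pmod n$ with $c=\sum_{i=0}^{h}\binom{m-1}{i-1}$, and derive a parity contradiction from $2\mid c$ together with $n\equiv 2\pmod 4$ — exactly the content of the paper's Lemma \ref{max-sumset-lem6} and its proof of Theorem \ref{max-sumset-thm7}. The only differences are cosmetic: you perform the double count globally over all subsets of size at most $h$ rather than layer-by-layer via Lemma \ref{max-sumset-lem5}, and you verify the two binomial congruences by Lucas/Kummer rather than by the paper's pairing and $2$-adic valuation computations; both verifications are sound.
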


We prove Theorem \ref{max-sumset-thm5}, Theorem \ref{max-sumset-thm6} and Theorem \ref{max-sumset-thm7} in Section \ref{sec2}, Section \ref{sec3} and Section \ref{sec4}, respectively. The proof is based on specific congruence relations satisfied by the sum of elements of a sumset and certain congruence properties of binomial coefficients.

\section{Proof of Theorem \ref{max-sumset-thm5}} \label{sec2}
We begin with the following lemmas.
\begin{lemma}[See {\cite[Section 2.5]{bajnok2018}}]\label{max-sumset-lem1}
Let $m$ and $h$ be positive integers. Then
\begin{equation}\label{max-sumset-lem1-eq1}
  |\Lambda^m(H)| =
  \begin{cases}
    \binom{m+h-1}{h}, & \mbox{if } H = \{h\}~ \text{and}~ \Lambda = [0, h];\\
    \binom{m}{h}, & \mbox{if } H = \{h\}~ \text{and}~ \Lambda = [0, 1]; \\
    \displaystyle\sum_{i=0}^{h}\binom{m}{i}, & \mbox{if } H = [0, h]~ \text{and}~ \Lambda = [0, 1].
  \end{cases}
\end{equation}
\end{lemma}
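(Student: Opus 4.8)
The plan is to enumerate $\Lambda^m(H)$ directly in each of the three cases. In all of them every coordinate $\lambda_i$ is nonnegative, so $|\lambda_i| = \lambda_i$, and the defining condition $|\lambda_1| + \cdots + |\lambda_m| \in H$ simplifies to $\lambda_1 + \cdots + \lambda_m \in H$.

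First, suppose $H = \{h\}$ and $\Lambda = [0, h]$. Then $\Lambda^m(H)$ is the set of $m$-tuples of integers $(\lambda_1, \ldots, \lambda_m)$ with $0 \le \lambda_i \le h$ for each $i$ and $\lambda_1 + \cdots + \lambda_m = h$. I would first observe that the constraint $\lambda_i \le h$ is redundant: if all $\lambda_i \ge 0$ and their sum is $h$, then each $\lambda_i \le h$ automatically. Hence $|\Lambda^m(H)|$ equals the number of weak compositions of $h$ into $m$ nonnegative parts, which by the stars-and-bars argument is $\binom{m + h - 1}{m - 1} = \binom{m + h - 1}{h}$.

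Next, suppose $H = \{h\}$ and $\Lambda = [0, 1]$. Now each $\lambda_i \in \{0, 1\}$, and the condition is $\lambda_1 + \cdots + \lambda_m = h$. Such a tuple is uniquely determined by the subset $\{i \in [1, m] : \lambda_i = 1\}$, which must have size exactly $h$; there are $\binom{m}{h}$ such subsets, which gives the claim. Finally, for $H = [0, h]$ and $\Lambda = [0, 1]$, the same reasoning applies, except that the common sum $j = \lambda_1 + \cdots + \lambda_m$ may be any element of $\{0, 1, \ldots, h\}$; partitioning $\Lambda^m(H)$ according to the value of $j$ and applying the previous count for each fixed $j$ yields $|\Lambda^m(H)| = \sum_{j=0}^{h} \binom{m}{j}$.

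All three parts reduce to elementary finite counting, so I do not anticipate any genuine obstacle. The one subtlety worth flagging is in the first case: one should explicitly note that the upper cap $\lambda_i \le h$ imposes no additional restriction once the sum is fixed to be $h$, so that the count is the unrestricted number of weak compositions rather than something smaller.
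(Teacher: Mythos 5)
Your counting argument is correct in all three cases, including the observation that the cap $\lambda_i \le h$ is vacuous once the nonnegative coordinates are required to sum to $h$, and the implicit use of the convention $\binom{m}{h}=0$ for $h>m$ in the second and third cases. The paper itself gives no proof of this lemma --- it is quoted directly from Bajnok's book --- and your stars-and-bars and subset-selection argument is exactly the standard derivation one would find there, so there is nothing to reconcile.
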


\begin{lemma}\label{max-sumset-lem2}
Let $m \geq 1$ and $h \geq 2$ be integers, and let $A$ be a subset of $m$ elements of an additive abelian group $G$ such that 
\[|hA| = \binom{m+h-1}{h}.\]
Then
\[|(h - 1)A| = \binom{m + h - 2}{h - 1}.\]
\end{lemma}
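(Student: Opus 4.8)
The plan is to read the hypothesis $|hA| = \binom{m+h-1}{h}$ as an injectivity statement about an evaluation map and then push that injectivity down from $h$ to $h-1$ by translating by a fixed element of $A$. Write $A = \{a_1, \dots, a_m\}$ and, for a nonnegative integer $j$, set $C_j = \{(\lambda_1, \dots, \lambda_m) \in \mathbb{N}_0^m : \lambda_1 + \cdots + \lambda_m = j\}$; by the usual stars-and-bars count (equivalently, by Lemma \ref{max-sumset-lem1} applied with $H = \{j\}$ and $\Lambda = [0, j]$) we have $|C_j| = \binom{m+j-1}{j}$. Observe that a vector of nonnegative integers summing to $j$ automatically has all coordinates in $[0, j]$, so by \eqref{h-fold-sumset} the evaluation map $\phi_j \colon C_j \to G$ defined by $\phi_j(\lambda_1, \dots, \lambda_m) = \lambda_1 a_1 + \cdots + \lambda_m a_m$ has image exactly $jA$. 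In particular, the hypothesis says $|\phi_h(C_h)| = |hA| = \binom{m+h-1}{h} = |C_h|$, and since $C_h$ is finite this forces $\phi_h$ to be injective.

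Next I would introduce the shift map $\iota \colon C_{h-1} \to C_h$ given by $\iota(\mu_1, \mu_2, \dots, \mu_m) = (\mu_1 + 1, \mu_2, \dots, \mu_m)$. It is well defined (the coordinate sum increases from $h-1$ to $h$) and visibly injective, and it satisfies $\phi_h(\iota(\mu)) = \phi_{h-1}(\mu) + a_1$ for every $\mu \in C_{h-1}$. Consequently, if $\phi_{h-1}(\mu) = \phi_{h-1}(\mu')$ for some $\mu, \mu' \in C_{h-1}$, then adding $a_1$ to both sides gives $\phi_h(\iota(\mu)) = \phi_h(\iota(\mu'))$, whence $\iota(\mu) = \iota(\mu')$ by injectivity of $\phi_h$, and then $\mu = \mu'$ by injectivity of $\iota$. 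Thus $\phi_{h-1}$ is injective, and therefore $|(h-1)A| = |\phi_{h-1}(C_{h-1})| = |C_{h-1}| = \binom{m+h-2}{h-1}$, which is exactly the claim.

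I do not expect a serious obstacle here; the argument is essentially a counting/injectivity bookkeeping exercise. The points that need a little care are: verifying that the domain $C_j$ of the evaluation map coincides with the index set appearing in \eqref{h-fold-sumset} (so that $\phi_j$ really surjects onto $jA$), checking that $|C_j|$ equals the stated binomial coefficient, and confirming that $\iota$ genuinely maps into $C_h$ and is injective. The hypothesis $m \geq 1$ is used precisely to ensure that there is a coordinate available to shift, while $h \geq 2$ merely guarantees that $(h-1)A$ is a sumset of the type under discussion.
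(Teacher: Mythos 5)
Your argument is correct and is essentially the paper's own proof: both read the hypothesis as injectivity of the evaluation map on the index set of size $\binom{m+h-1}{h}$ and lift any collision at level $h-1$ to one at level $h$ by incrementing a single coordinate; you phrase this directly (contrapositive) while the paper phrases it as a proof by contradiction. No substantive difference.
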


\begin{proof}
Let $\Lambda = [0, h]$. Then it follows from Lemma \ref{max-sumset-lem1} that
\begin{equation}\label{max-sumset-lem2-eq1}
|hA| = \binom{m + h - 1}{h} = |\Lambda^m(\{h\})|. 
\end{equation}
Therefore, distinct $m$-tuples in $\Lambda^m(\{h\})$ correspond to distinct elements of $hA$. If 
\[|(h - 1)A| \neq \binom{m + h - 2}{h - 1},\] 
then it follows from Theorem \ref{max-sumset-thm1} and Lemma \ref{max-sumset-lem1} that
\[|(h-1)A| < \binom{m + h - 2}{h-1} = |\Lambda^m(\{h-1\})|.\] 
Therefore, there exist two distinct $m$-tuples $(\lambda_1, \dots,\lambda_m) \in \Lambda^m(\{h-1\})$ and $(\lambda'_1, \dots,\lambda'_m) \in \Lambda^m(\{h-1\})$ such that
 \[\lambda_1 a_1 + \cdots + \lambda_m a_m = \lambda'_1 a_1 + \cdots + \lambda'_m a_m,\]
and so
 \[(\lambda_1 + 1)a_1 + \cdots + \lambda_m a_m = (\lambda'_1 + 1) a_1 + \cdots + \lambda'_m a_m.\]
Thus $(\lambda_1 + 1, \dots,\lambda_m)$ and $(\lambda'_1 +1, \dots,\lambda'_m)$ are distinct $m$-tuples in $\Lambda^m(\{h\})$, which correspond to the same element of $hA$, and so
 \[|hA| < |\Lambda^m(\{h\})| = \binom{m + h - 1}{h}.\] 
This contradicts \eqref{max-sumset-lem2-eq1}. Therefore, we must have
 \[|(h - 1)A| = \binom{m + h - 2}{h-1}.\] 
\end{proof}
	
\begin{lemma} \label{max-sumset-lem4}
Let $h$ and $m$ be positive integers. Let $A = \{a_1, \dots, a_m\}$ be a subset of an additive abelian group $G$ such that
\begin{equation}\label{max-sumset-lem4-eq1}
  |hA| = \binom{m+h-1}{h}.
\end{equation} 
Then
\begin{equation}\label{max-sumset-lem4-eq2} 
\sum_{a \in hA}a = \binom{m+h-1}{h-1}(a_1 + \cdots + a_m).
\end{equation}
\end{lemma}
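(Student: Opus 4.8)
The key observation is that the hypothesis \eqref{max-sumset-lem4-eq1} forces $|hA| = |\Lambda^m(\{h\})|$ with $\Lambda = [0,h]$, so by the reasoning already used in Lemma \ref{max-sumset-lem2} the map $(\lambda_1,\dots,\lambda_m) \mapsto \lambda_1 a_1 + \cdots + \lambda_m a_m$ is a \emph{bijection} from $\Lambda^m(\{h\})$ onto $hA$. Consequently the sum over $hA$ can be rewritten as a sum over index tuples: $\sum_{a \in hA} a = \sum_{(\lambda_1,\dots,\lambda_m) \in \Lambda^m(\{h\})} (\lambda_1 a_1 + \cdots + \lambda_m a_m)$. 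The plan is to evaluate this by interchanging the order of summation, so that it becomes $\sum_{i=1}^{m} \Bigl( \sum_{(\lambda_1,\dots,\lambda_m) \in \Lambda^m(\{h\})} \lambda_i \Bigr) a_i$.

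The next step is to compute, for a fixed index $i$, the coefficient $c := \sum_{(\lambda_1,\dots,\lambda_m) \in \Lambda^m(\{h\})} \lambda_i$. By symmetry this value does not depend on $i$, so $c$ is the total number of ways of writing $h$ as an ordered sum $\lambda_1 + \cdots + \lambda_m$ of nonnegative integers, counted with multiplicity $\lambda_i$. A clean way to see the value is the combinatorial identity $\sum_{(\lambda_1,\dots,\lambda_m)} \lambda_i = \sum_{t \ge 1} \#\{\text{tuples with } \lambda_i \ge t\} = \sum_{t\ge 1} \binom{m + h - t - 1}{h - t} = \sum_{j=0}^{h-1} \binom{m+j-1}{j} = \binom{m+h-1}{h-1}$, using the hockey-stick identity in the last step; here the term $\#\{\lambda_i \ge t\}$ equals the number of weak compositions of $h - t$ into $m$ parts, which is $\binom{m+h-t-1}{m-1} = \binom{m+h-t-1}{h-t}$. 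Alternatively one can argue via averaging: summing $c$ over all $m$ coordinates gives $\sum_{\text{tuples}} (\lambda_1 + \cdots + \lambda_m) = h \cdot |\Lambda^m(\{h\})| = h\binom{m+h-1}{h}$, hence $c = \frac{h}{m}\binom{m+h-1}{h} = \binom{m+h-1}{h-1}$, which is the slicker route and the one I would present.

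Substituting $c = \binom{m+h-1}{h-1}$ back into the interchanged sum yields $\sum_{a \in hA} a = \binom{m+h-1}{h-1} \sum_{i=1}^{m} a_i = \binom{m+h-1}{h-1}(a_1 + \cdots + a_m)$, which is exactly \eqref{max-sumset-lem4-eq2}. There is no real obstacle here: the only point requiring care is the justification that the representation map is injective on $\Lambda^m(\{h\})$ — this is precisely where hypothesis \eqref{max-sumset-lem4-eq1} is used, via Theorem \ref{max-sumset-thm1}(1) and Lemma \ref{max-sumset-lem1}, exactly as in the proof of Lemma \ref{max-sumset-lem2} — after which everything is a short symmetry-plus-counting computation.
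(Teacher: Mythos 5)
Your proof is correct, but it follows a genuinely different route from the paper. The paper proves Lemma \ref{max-sumset-lem4} by induction on $h$: it first establishes Lemma \ref{max-sumset-lem2} (that $|hA|=\binom{m+h-1}{h}$ forces $|(h-1)A|=\binom{m+h-2}{h-1}$), then decomposes $hA$ via the translated sets $S_i=(h-1)A+a_i$ and extracts the coefficient of $a_i$ as $\binom{m+h-2}{h-2}+\binom{m+h-2}{h-1}=\binom{m+h-1}{h-1}$ using Pascal's identity and the induction hypothesis. You instead use the cardinality hypothesis once, to conclude that the representation map $\Lambda^m(\{h\})\to hA$ is a bijection, and then compute $\sum_{a\in hA}a$ directly as a sum over index tuples, evaluating the common coefficient $c=\sum_{\lambda}\lambda_i$ by the symmetry/averaging identity $mc=h\,|\Lambda^m(\{h\})|$, which gives $c=\frac{h}{m}\binom{m+h-1}{h}=\binom{m+h-1}{h-1}$. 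This is arguably cleaner: it avoids induction entirely, makes Lemma \ref{max-sumset-lem2} unnecessary for this purpose, and sidesteps the slightly delicate bookkeeping in the paper's argument about how the sets $S_i$ cover $hA$. What the paper's inductive formulation buys is a template that transfers almost verbatim to the restricted setting (Lemmas \ref{max-sumset-lem3} and \ref{max-sumset-lem5}), though your double-counting argument adapts there just as easily (with $c=\binom{m-1}{h-1}$ for $0$--$1$ tuples). Both the hockey-stick computation and the averaging computation you give are correct; the only hypothesis-dependent step, injectivity of the representation map, is justified exactly as you say via Theorem \ref{max-sumset-thm1}(1) and Lemma \ref{max-sumset-lem1}.
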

	
\begin{proof}
The proof is by induction on $h$. If $h=1$, then
\[\sum_{a \in hA} a = \sum_{a \in A} a = (a_1 + a_2 + \dots + a_m) = \binom{m}{0}(a_1 + \dots + a_m).\]
Thus, the lemma is true for $h=1$. Now for the induction hypothesis, assume that the lemma is true for $h-1$ where $h\geq2$. That is, if $|(h-1)A|= \binom{m + h - 2}{h - 1}$, then  
\begin{equation}\label{max-sumset-lem4-eq3} 
\sum_{a \in (h-1)A}a = \binom{m+h-2}{h-2}(a_1 + \dots+ a_m).
\end{equation}
We write
\begin{equation}\label{max-sumset-lem4-eq4}
  \sum_{a \in hA} a = \sum_{i=1}^{m} \alpha_i a_i,
\end{equation}
where $\alpha_1, \ldots, \alpha_m$ are integers to be determined. Values of these integers can be computed as follows: For each $i \in [1, m]$, let
\[S_i = (h-1)A + a_i.\]
The set $S_i$ contains precisely those elements of $hA$ each of which has a representation as a sum of $h$ elements of $A$ with $a_i$ as one of the summands. Thus the coefficient $\alpha_i$ in \eqref{max-sumset-lem4-eq4} is precisely the coefficient of $a_i$ in the sum elements of $S_i$ written as a linear combination of $a_1, \ldots, a_m$. Now, for each $i \in [1, m]$, we have		
\[\sum_{s \in S_i} s = \sum_{a \in (h-1)A + a_i}a = \sum_{a \in (h-1)A}a + |(h-1)A|a_i.\]
Since $|hA| = \binom{m+h-1}{h}$, it follows from Lemma \ref{max-sumset-lem2} that $|(h-1)A| = \binom{m+h-2}{h-1}$. Therefore, using induction hypothesis, it follows from \eqref{max-sumset-lem4-eq3} that		
\[\sum_{s \in S_i} s = \binom{m+h-2}{h-2}(a_1 + a_2 + \cdots+ a_m) + \binom{m+h-2}{h-1}a_i.\]
This implies that
\[\alpha_i = \binom{m+h-2}{h-2} + \binom{m+h-2}{h-1} = \binom{m+h-1}{h-1} \]
Therefore, it follows from \eqref{max-sumset-lem4-eq4} that
\[\sum_{a\in hA}a = \sum_{i=1}^{m}\binom{m+h-1}{h-1}a_i = \binom{m+h-1}{h-1}(a_1 + \cdots + a_m).\]
This completes the proof.
\end{proof}

\begin{proof}[Proof of Theorem \ref{max-sumset-thm5}] 
The proof is by contradiction. If the inequality in Theorem \ref{max-sumset-thm5} is not true, then it follows from Theorem \ref{max-sumset-thm2} that
\[\nu(\mathbb{Z}_n, m, h) = \min\left\{n,\binom{m+h-1}{h}\right\} = n.\]
Therefore, there exists a subset $A$ of $\mathbb{Z}_n$ with $m$ elements, say $A = \{a_1, a_2, \dots, a_m\}$, such that $hA = \mathbb{Z}_n$.
Hence
\begin{equation}\label{max-sumset-thm5-eq1}
\sum_{a \in hA}a \equiv \sum_{a \in \mathbb{Z}_n }a \equiv \frac{n(n+1)}{2} \pmod n.
\end{equation}
Now
\begin{equation*}
\begin{aligned}
n=\binom{m+h-1}{h} &= \frac{(m)(m+1)\dots(m+h-2)(m+h-1)}{h!}\\
&=\frac{(m)(2^{\lfloor\log_2 h\rfloor+1}k) \dots (m+h-1)}{h!}\\
&=\frac{(m)(2^{\lfloor\log_2 h\rfloor+1}k)(c)}{h(h-1)},
\end{aligned}
\end{equation*}
where $c$ is a positive integer such that $c=1$ when $h=2$, and $c=  \frac{(m+2)\dots (m+h-1)}{(h-2)!}$ when $h > 2$. Let $r = \nu_2(h(h-1))$. Since $\gcd(h, h-1) = 1$, it follows that either $2^r$ divides $h$ or $2^r$ divides $h-1$. In any case, $2^r \leq h$, which implies that $r \leq \lfloor\log_2 h \rfloor$. Hence 
\[\nu_2((m)(2^{\lfloor \log_2 h \rfloor+1}k)(c)) > \nu_2(h(h-1)),\]
and so $\nu_2(n) \geq 1$. Thus $n$ is  an even integer. Therefore, it follows from \eqref{max-sumset-thm5-eq1} that	

\begin{equation*}
\sum_{a \in hA}a \equiv \frac{n}{2} \pmod n.
\end{equation*}
Let  $a_1 + \dots+ a_m = x$. Since $n = \binom{m+h-1}{h}$, it follows from Lemma \ref{max-sumset-lem4} that 		
\[\binom{m +h-1}{h-1}x \equiv \frac{1}{2}\binom{m+h-1}{h} \pmod*{\binom{m+h-1}{h}}.\]
Multiplying both sides with $2(h!)$, we get
\[2(h!)\binom{m + h - 1}{h - 1}x \equiv 2(h!)\frac{1}{2}\binom{m+h-1}{h} \pmod*{2(h!)\binom{m + h - 1}{h}}.\]
Simplifying the above expressions, we get		
\[2hx \equiv m  \pmod{2m},\]
which implies that
\[m \equiv 0 \pmod 2.\]
This is a contradiction since $m$ is odd. Therefore, the theorem must be true.
\end{proof}
	
\section{Proof of Theorem \ref{max-sumset-thm6}}\label{sec3}
	
To prove Theorem \ref{max-sumset-thm6} and Theorem \ref{max-sumset-thm7} we need the following lemmas. 

\begin{lemma}\label{max-sumset-lem3}
Let $m$ and $h$ be integers such that $2 \leq h \leq m$, and let $A$ be a subset of $m$ elements of an additive abelian group $G$ such that 
\[|h^\wedge A| = \binom{m}{h}.\]
Let $B = A \setminus \{a\}$ for some $a \in A$. Then
\[|(h - 1)^\wedge B| = \binom{m - 1}{h - 1}.\]
\end{lemma}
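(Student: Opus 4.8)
The plan is to mirror the argument used for Lemma~\ref{max-sumset-lem2}, but working with $h$-element subsets of $A$ in place of the index tuples in $\Lambda^m(H)$. First I would unpack the combinatorial meaning of the hypothesis: the number of subsets of $A$ of size $h$ is exactly $\binom{m}{h}$, and by Theorem~\ref{max-sumset-thm1}(2) we have $|h^\wedge A| \leq \binom{m}{h}$; hence the equality $|h^\wedge A| = \binom{m}{h}$ is equivalent to the statement that the map sending each $h$-element subset $T \subseteq A$ to the group element $\sum_{t \in T} t$ is injective.

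Next I would argue by contradiction, assuming $|(h-1)^\wedge B| \neq \binom{m-1}{h-1}$. Since $B$ has $m-1$ elements and $h-1 \leq m-1$ (because $h \leq m$), Theorem~\ref{max-sumset-thm1}(2) applied to $B$ gives $|(h-1)^\wedge B| < \binom{m-1}{h-1}$, and $\binom{m-1}{h-1}$ is precisely the number of $(h-1)$-element subsets of $B$. Therefore there exist two distinct $(h-1)$-element subsets $S \neq S'$ of $B$ with $\sum_{s \in S} s = \sum_{s \in S'} s$.

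The key step is to lift this collision from $B$ back to $A$. Because $a \notin B$, both $S \cup \{a\}$ and $S' \cup \{a\}$ are $h$-element subsets of $A$, they are distinct (they differ exactly where $S$ and $S'$ differ), and adding $a$ to both sides of the equality of sums shows they have the same sum. This contradicts the injectivity established in the first step, so we must have $|(h-1)^\wedge B| = \binom{m-1}{h-1}$.

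I do not expect a genuine obstacle here: this is just the restricted-sumset analogue of the ``remove one summand'' trick in Lemma~\ref{max-sumset-lem2}. The only points that deserve a line of care are confirming that $(h-1)^\wedge B$ is the intended object, i.e.\ that $1 \leq h-1 \leq m-1$, which follows from the hypothesis $2 \leq h \leq m$, and noting that the lifted subsets $S \cup \{a\}$ and $S' \cup \{a\}$ stay distinct, which is automatic.
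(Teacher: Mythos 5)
Your proof is correct and is essentially the same argument as the paper's: the paper phrases it with $0$--$1$ index tuples in $\Lambda^m(\{h\})$, which are exactly the indicator vectors of your $h$-element subsets, and it performs the identical lift of a collision in $(h-1)^\wedge B$ to a collision in $h^\wedge A$ by setting the coordinate of $a$ to $1$ (i.e., adjoining $a$ to both subsets). No substantive difference.
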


\begin{proof}
Let $\Lambda = [0, 1]$. Then it follows from Lemma \ref{max-sumset-lem1} that
\begin{equation}\label{max-sumset-lem3-eq1}
 |h^\wedge A| = \binom{m}{h} = |\Lambda^m(\{h\})|. 
\end{equation}
Therefore, distinct $m$-tuples in $\Lambda^m(\{h\})$ correspond to distinct elements of $h^\wedge A$. Let $A = \{a_1, \ldots, a_m\}$ and $a = a_i$ for some $i \in [1, m]$. Then $B = A \setminus \{a_i\}$. If
\[|(h - 1)^\wedge B| \neq \binom{m - 1}{h - 1},\] 
then it follows from Theorem \ref{max-sumset-thm1} and Lemma \ref{max-sumset-lem1} that
\[|(h - 1)^\wedge B| < \binom{m-1}{h-1} = |\Lambda^{m-1}(\{h-1\})|.\] 
Hence there exist two distinct $m-1$-tuples $(\lambda_1, \ldots, \lambda_{i-1}, \lambda_{i + 1}, \ldots \lambda_m) \in \Lambda^{m-1}(\{h-1\})$ and $(\lambda'_1, \ldots,\lambda'_{i-1}, \lambda'_{i + 1}, \ldots \lambda'_m \in \Lambda^{m-1}(\{h-1\})$ such that
\[\lambda_1 a_1 + \cdots + \lambda_{i-1}a_{i-1} + \lambda_{i+1}a_{i+1} + \cdots + \lambda_m a_m = \lambda'_1 a_1 + \cdots + \lambda'_{i-1}a_{i-1} + \lambda'_{i+1}a_{i+1} + \cdots + \lambda'_m a_m,\]
and so
\[\lambda_1 a_1 + \cdots + \lambda_i a_i + \cdots + \lambda_m a_m = \lambda'_1 a_1 + \cdots + \lambda_i' a_i + \cdots + \lambda'_m a_m,\]
where $\lambda_i = \lambda_i' = 1$. Thus there exist distinct $m$-tuples $(\lambda_1, \dots,\lambda_m)$ and $(\lambda'_1, \dots,\lambda_m')$ in $\Lambda^m(\{h\})$, which correspond to the same element of $h^\wedge A$, and so
\[|h^\wedge A| < |\Lambda^m(\{h\})| = \binom{m}{h}.\] 
This contradicts \eqref{max-sumset-lem3-eq1}. Therefore, we must have
\[|(h - 1)^\wedge B| = \binom{m - 1}{h - 1}.\]
\end{proof}
	
\begin{lemma} \label{max-sumset-lem5}	
Let $m$ and $h$ be positive integers such that $h \leq m$. Let $A = \{a_1, \dots, a_m\}$ be a subset of an additive abelian group $G$ with $m$ elements such that 
\[|h^\wedge A| = \binom{m}{h}.\]
Then 
\[\sum_{a\in h^\wedge A}a =  \binom{m - 1}{h - 1}(a_1 + a_2 + \dots+ a_m).\]
\end{lemma}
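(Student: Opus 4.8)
The plan is to mirror the proof of Lemma \ref{max-sumset-lem4}, replacing Lemma \ref{max-sumset-lem2} by Lemma \ref{max-sumset-lem3}, and to argue by induction on $h$. For the base case $h=1$ we have $1^\wedge A = A$, so
\[
\sum_{a \in 1^\wedge A} a = a_1 + \cdots + a_m = \binom{m-1}{0}(a_1 + \cdots + a_m),
\]
and the claim holds. So assume $h \ge 2$ and that the lemma is true with $h-1$ in place of $h$, for every finite subset of $G$ of size at least $h-1$.

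Next I would exploit the hypothesis $|h^\wedge A| = \binom{m}{h}$. By Lemma \ref{max-sumset-lem1} the index set $\Lambda^m(\{h\})$ (with $\Lambda=[0,1]$) has exactly $\binom{m}{h}$ elements, corresponding to the $h$-element subsets of $A$; since the natural surjection from this index set onto $h^\wedge A$ is then a bijection, every element of $h^\wedge A$ has a \emph{unique} representation as a sum of $h$ distinct elements of $A$. Writing $\sum_{a \in h^\wedge A} a = \sum_{i=1}^m \alpha_i a_i$, the coefficient $\alpha_i$ is read off from these unique representations: fixing $i \in [1,m]$, set $B_i = A \setminus \{a_i\}$ and $S_i = (h-1)^\wedge B_i + a_i$. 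Then $S_i$ is precisely the set of elements of $h^\wedge A$ admitting a representation with $a_i$ among the summands, and by uniqueness $\alpha_i$ is the coefficient of $a_i$ in $\sum_{s \in S_i} s$ when the latter is written in terms of $a_1,\dots,a_m$.

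For the computation, observe
\[
\sum_{s \in S_i} s = \sum_{b \in (h-1)^\wedge B_i} b + |(h-1)^\wedge B_i|\, a_i .
\]
By Lemma \ref{max-sumset-lem3}, $|(h-1)^\wedge B_i| = \binom{m-1}{h-1}$, and since $|B_i| = m-1 \ge h-1$, the induction hypothesis applied to $B_i$ gives $\sum_{b \in (h-1)^\wedge B_i} b = \binom{m-2}{h-2}\bigl(\sum_{j=1}^m a_j - a_i\bigr)$. Substituting and collecting the coefficient of $a_i$ yields
\[
\alpha_i = \binom{m-2}{h-2} + \Bigl(\binom{m-1}{h-1} - \binom{m-2}{h-2}\Bigr) = \binom{m-1}{h-1},
\]
where the last equality is Pascal's rule $\binom{m-1}{h-1} = \binom{m-2}{h-2} + \binom{m-2}{h-1}$. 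Since this holds for every $i$, we obtain $\sum_{a \in h^\wedge A} a = \binom{m-1}{h-1}(a_1 + \cdots + a_m)$, as required.

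The only real point needing care is making rigorous the assertion that $\alpha_i$ is ``the coefficient of $a_i$ in $\sum_{s \in S_i} s$'' — a priori a coefficient of $a_i$ in $G$ is not well defined, as in the proof of Lemma \ref{max-sumset-lem4}. This is legitimate because, under $|h^\wedge A| = \binom{m}{h}$, the sum $\sum_{a \in h^\wedge A} a$ equals the formal double sum $\sum_{T:\,|T|=h} \sum_{j \in T} a_j$ evaluated in $G$, in which $a_j$ appears with multiplicity $\binom{m-1}{h-1}$; one could in fact bypass the induction altogether and read the result off from this double-counting identity, but routing it through $S_i$ and Lemma \ref{max-sumset-lem3} keeps the proof strictly parallel to that of Lemma \ref{max-sumset-lem4}.
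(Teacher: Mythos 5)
Your proof is correct and follows essentially the same route as the paper: the decomposition via the sets $S_i = (h-1)^\wedge B_i + a_i$ combined with Lemma \ref{max-sumset-lem3}. The only difference is that the paper's proof of this lemma dispenses with the induction on $h$: since each element of $S_i$ contributes exactly one copy of $a_i$ in its (unique) representation, one reads off $\alpha_i = |(h-1)^\wedge B_i| = \binom{m-1}{h-1}$ directly, without evaluating $\sum_{b \in (h-1)^\wedge B_i} b$; your closing double-counting remark is in the same spirit and is indeed the cleanest formulation.
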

	
\begin{proof}
We write
\begin{equation}\label{max-sumset-lem5-eq1}
  \sum_{a \in h^\wedge A} a = \sum_{i=1}^{m} \alpha_i a_i,
\end{equation}
where $\alpha_1, \ldots, \alpha_m$ are integers to be determined. Values of these integers can be computed as follows: For each $i \in [1, m]$, let $B_i = A \setminus \{a_i\}$ and
\[S_i = (h-1)^\wedge B_i + a_i.\]
The set $S_i$ contains precisely those elements of $h^\wedge A$ each of which has a representation as a sum of $h$ distinct elements of $A$ with $a_i$ as one of the summands. Thus the coefficient $\alpha_i$ in \eqref{max-sumset-lem5-eq1} is precisely the coefficient of $a_i$ in the sum elements of $S_i$ written as a linear combination of $a_1, \ldots, a_m$. Therefore, $\alpha_i = |(h-1)^\wedge B_i|$ for each $i \in [1, m]$. Since $|h^\wedge A| = \binom{m}{h}$, it follows from Lemma \ref{max-sumset-lem3} that $\alpha_i = |(h-1)^\wedge B_i| = \binom{m - 1}{h-1}$.
Therefore, it follows from \eqref{max-sumset-lem5-eq1} that
\[\sum_{a\in h^\wedge A}a = \sum_{i=1}^{m}\binom{m-1}{h-1}a_i = \binom{m-1}{h-1}(a_1 + \cdots + a_m).\]
This completes the proof.
\end{proof}
	
\begin{proof}[Proof of Theorem \ref{max-sumset-thm6}] 
The proof is by contradiction. If the inequality \eqref{max-sumset-thm6-eq1} in Theorem \ref{max-sumset-thm6} is not true, then it follows from Theorem \ref{max-sumset-thm3} that
\[\nu(\mathbb{Z}_n, m, h) = \min\left\{n,\binom{m}{h}\right\} = n.\]
Therefore, there exists a subset $A$ of $\mathbb{Z}_n$ with $m$ elements, say $A = \{a_1, a_2, \dots, a_m\}$, such that $h^\wedge A = \mathbb{Z}_n$.
Hence
\begin{equation}\label{max-sumset-thm6-eq3}
\sum_{a \in h^\wedge A}a \equiv \sum_{a \in \mathbb{Z}_n }a \equiv \frac{n(n+1)}{2} \pmod n.
\end{equation}
Now
\begin{equation*}
\begin{aligned}
n = \binom{m}{h} & = \frac{m(m - 1) \dots (m - h + 1)}{h!}\\
& = \frac{m(2^{\lfloor\log_2 h\rfloor + 1})}{h(h - 1)}c,
\end{aligned}
\end{equation*}
where $c$ is a positive integer such that $c = 1$ when $h = 2$, and $c = \frac{(m - 2) \dots (m - h + 1)}{(h - 2)!}$ when $h > 2$. Now an argument similar to the one used in the proof of Theorem \ref{max-sumset-thm5} shows that $n$ is an even integer. Therefore, it follows from \eqref{max-sumset-thm6-eq3} that
\begin{equation*}
\sum_{a\in h^\wedge A}a \equiv \frac{n}{2} \pmod n.
\end{equation*}
Let  $ a_1 + \dots+ a_m = x$. Since $n = \binom{m}{h}$, it follows from Lemma \ref{max-sumset-lem5} that
\[\binom{m-1 }{h-1}x \equiv \frac{1}{2}\binom{m}{h} \pmod*{\binom{m}{h}}.\]
Multiplying both sides with $2(h!)$, we get
\[2(h!)\binom{m-1 }{h-1}x \equiv 2(h!)\frac{1}{2}\binom{m}{h} \pmod*{2(h!)\binom{m}{h}}.\]
Simplifying the above expressions, we get
\[2hx \equiv m \pmod{2m},\]
and so 
\[m \equiv 0 \pmod 2.\]
This is a contradiction since $m$ is odd. Therefore, the inequality \eqref{max-sumset-thm6-eq1} must be true.
		
The inequality \eqref{max-sumset-thm6-eq2} follows from the fact that $\nu^\wedge(\mathbb{Z}_n, m, h) = \nu^\wedge(\mathbb{Z}_n, m, m - h)$ and $\binom{m}{h} = \binom{m}{m - h}$. This completes the proof of the theorem.
\end{proof}

\section{Proof of Theorem \ref{max-sumset-thm7}}\label{sec4}
We begin with the following lemma.	
\begin{lemma} \label{max-sumset-lem6}	
Let $k$ be an arbitrary nonnegative integer, and let $h \geq 2$ be an integer. Let $m = 2^{\lfloor\log_2 h \rfloor}(4k + 3)$. Then
\begin{equation}\label{max-sumset-lem6-eq1}
\begin{aligned}
 \sum_{i = 0}^{h}\binom{m - 1}{i - 1} & \equiv 0 \pmod 2.\\
 \sum_{i = 0}^{h}\binom{m}{i} &\equiv 2 \pmod 4.
\end{aligned}
\end{equation} 
\end{lemma}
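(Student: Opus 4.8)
The plan is to reduce both congruences in \eqref{max-sumset-lem6-eq1} to two elementary facts about the single number $m-1 = 2^{\lfloor\log_2 h\rfloor}(4k+3)-1$, and then to establish those two facts by reading off binary digits. Throughout, write $s=\lfloor\log_2 h\rfloor$, so that $2^s\le h<2^{s+1}$ and, since $h\ge 2$, also $s\ge 1$. Since $4k+3$ is odd, $\nu_2(m)=s$, and $m=2^s(4k+3)\ge 3\cdot 2^s>2^{s+1}>h$, so all the binomial coefficients that occur below are genuine ones.

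First I would record the reductions. The $i=0$ term of $\sum_{i=0}^{h}\binom{m-1}{i-1}$ vanishes, and reindexing gives $\sum_{i=0}^{h}\binom{m-1}{i-1}=\sum_{j=0}^{h-1}\binom{m-1}{j}$; applying Pascal's rule $\binom{m}{i}=\binom{m-1}{i-1}+\binom{m-1}{i}$ and telescoping gives $\sum_{i=0}^{h}\binom{m}{i}=2\sum_{j=0}^{h-1}\binom{m-1}{j}+\binom{m-1}{h}$. Hence it suffices to prove (a) that $\sum_{j=0}^{h-1}\binom{m-1}{j}$ is even, and (b) that $\binom{m-1}{h}\equiv 2\pmod{4}$: once (a) holds, the term $2\sum_{j}\binom{m-1}{j}$ is divisible by $4$, so the second congruence in \eqref{max-sumset-lem6-eq1} follows from (b).

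Next I would pin down the binary expansion of $m-1$. Since $4k+3\equiv 3\pmod 4$, the number $m$ has bit $s$ and bit $s+1$ both equal to $1$ and bits $0,\dots,s-1$ equal to $0$; hence $m-1$ has bits $0,\dots,s-1$ all equal to $1$, bit $s$ equal to $0$, and bits of index $\ge s+1$ equal to those of $2k+1$ shifted up by $s+1$. For (a), by Lucas' theorem $\binom{m-1}{j}$ is odd exactly when $j$ is a binary submask of $m-1$. Any $j$ with $2^s\le j\le h-1<2^{s+1}$ has bit $s$ set and no higher bit, so it is not a submask of $m-1$; while every $j\in[0,2^s-1]$ is a submask, because the bottom $s$ bits of $m-1$ are all $1$. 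Thus the odd terms are precisely those with $j\in[0,2^s-1]$, and there are $2^s$ of them — an even number since $s\ge 1$ — which gives (a). For (b), note that $\binom{m-1}{h}\equiv 2\pmod 4$ is equivalent to $\nu_2\binom{m-1}{h}=1$, so I would invoke Kummer's theorem: $\nu_2\binom{m-1}{h}$ equals the number of carries when adding $h$ and $m-1-h$ in base $2$. Writing $h=2^s+t$ with $0\le t\le 2^s-1$, a short computation gives $m-1-h=2^{s+1}(2k+1)-(t+1)$, and one checks that this addition produces exactly one carry: the bits below position $s$ pair up to $1$'s without carrying, position $s$ has a $1$ in each summand and so emits a single carry, and that carry is absorbed at position $s+1$ with nothing further propagating.

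The step I expect to be the main obstacle is the last one: describing the binary digits of $m-1-h=2^{s+1}(2k+1)-(t+1)$, which involves a borrow, and verifying the single-carry claim uniformly — in particular handling the boundary case $h=2^{s+1}-1$ (i.e. $t=2^s-1$), where $m-1-h=2^{s}(4k+1)$ and the generic description degenerates, yet the carry count is still exactly $1$. Everything else — the reductions, the application of Lucas, and the arithmetic with $2^s(4k+3)$ — is routine.
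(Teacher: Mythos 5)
Your proposal is correct, and the overall skeleton matches the paper's: both reindex the first sum to $\sum_{j=0}^{h-1}\binom{m-1}{j}$ and use Pascal's rule to write $\sum_{i=0}^{h}\binom{m}{i}=2\sum_{j=0}^{h-1}\binom{m-1}{j}+\binom{m-1}{h}$, so that everything reduces to the parity of the partial row sum and to $\nu_2\binom{m-1}{h}=1$. Where you diverge is in how these two facts are established. The paper proves the parity claim by pairing consecutive terms via the identity $(m-i)\binom{m-1}{i-1}=i\binom{m-1}{i}$ (which forces a case split on the parity of $h-2^{s}$), and computes $\nu_2$ of the relevant binomial coefficient by comparing $\nu_2(m-j)$ with $\nu_2(j)$ factor by factor. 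You instead read off the binary expansion $m-1=2^{s+1}(2k+1)+(2^{s}-1)$ and apply Lucas (the odd terms are exactly $j\in[0,2^{s}-1]$, hence $2^{s}$ of them, an even count since $s\ge 1$) and Kummer (exactly one carry in $h+(m-1-h)$, occurring at bit $s$). Your route is more uniform --- no case split --- and your carry analysis does go through in the boundary case $h=2^{s+1}-1$ exactly as you suspect, since $2^{s+1}-(t+1)$ always lies in $[2^{s},2^{s+1}-1]$ so bit $s$ is set and bit $s+1$ is clear regardless of $t$; the trade-off is that you invoke Lucas and Kummer as black boxes where the paper's argument is self-contained. Both are complete and correct.
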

	
\begin{proof}
Let $v$ be a positive integer such that $h \in [2^v, 2^ {v + 1} - 1]$. Then$\lfloor\log_2 h\rfloor = v$ and $m = 2^v(4k + 3)$. Let $h = 2^v + r$, where $r\in [0, 2^v - 1]$. We consider two cases. 
		
\noindent\textbf{Case 1} $(r\in \{0, 2, 4, \dots, 2^v-2 \})$.
For each integer $i \in [1, m]$, we have
\[(m - i) \binom{m - 1}{i - 1} = i\binom{m - 1}{i}.\]
Therefore, for each $i \in \{1, 3, \dots, 2^v + r - 1\}$, we have
\[\binom{m - 1}{i - 1} \equiv \binom{m - 1}{i} \pmod 2,\]
and so
\[\binom{m - 1}{i - 1} + \binom{m - 1}{i} \equiv 0  \pmod 2.\]
Hence
\[\sum_{i = 0}^{h}\binom{m - 1}{i - 1} \equiv \sum_{i = 0}^{2^v + r}\binom{m - 1}{i - 1} \equiv 0 \pmod 2.\]

\noindent\textbf{Case 2} $(r\in \{1, 3, \dots, 2^v - 1 \})$. In this case $r - 1 \in \{0, 2, \dots, 2^v - 2 \}$. Therefore, it follows from \textbf{Case 1} that
\[\sum_{i = 0}^{2^v + r}\binom{m - 1}{i - 1} \equiv \sum\limits_{i = 0}^{2^v + r - 1}\binom{m - 1}{i - 1} + \binom{m - 1}{2^v + r - 1} \equiv \binom{m - 1}{2^v + r - 1} \pmod 2.\]
Let  
\[\alpha_r = \binom{m - 1}{2^v + r - 1}.\]
Then
\[\alpha_r = \frac{ (m - 1)(m - 2)\dots(m - (2^v - 1))(m - 2^v) \dots(m - (2^v + r - 1))}{(1)(2) \dots(2^v - 1)(2^v) \dots(2^v + r - 1)}\]
Observe that for each $j \in [1, 2^v + u - 1] \setminus \{2^v\}$, we have
\[\nu_2(m - j) = \nu_2(j),\]
and for $j = 2^v$, we have
\[\nu_2(m - j) = \nu_2(j) + 1.\]
Hence $\nu_2 (\alpha_r) = 1$. This implies that $\alpha_r \equiv 0 \pmod 2$ and $\alpha_r \equiv 2 \ (\textrm{mod} \ 4)$. Hence 
\[\sum\limits_{i = 0}^{h}\binom{m - 1}{i - 1} \equiv 0 \pmod 2.\]
This proves the first congruence relation in \eqref{max-sumset-lem6-eq1}.

Now we prove the second congruence relation in \eqref{max-sumset-lem6-eq1}. By Pascal's identity we have 
\begin{equation*}
\begin{aligned}
\sum\limits_{i = 0}^{h}\binom{m}{i} & =  \sum\limits_{i = 0}^{h}\binom{m - 1}{i - 1} + \sum\limits_{i = 0}^{h}\binom{m - 1}{i}\\
& = \sum\limits_{i = 0}^{h}\binom{m - 1}{i - 1} + \sum\limits_{i = 0}^{h}\binom{m -1}{i - 1} + \binom{m - 1}{h}\\
& = 2\sum\limits_{i = 0}^{h}\binom{m - 1}{i - 1}+\binom{m - 1}{h}.\\
\end{aligned}
\end{equation*}
It follows from the first congruence relation in \eqref{max-sumset-lem6-eq1} that
\[2\sum\limits_{i = 0}^{h}\binom{m - 1}{i - 1} \equiv 0 \ (\textrm{mod} \ 4)\]
Thus 
\[\sum\limits_{i = 0}^{h}\binom{m}{i}\equiv\binom{m - 1}{h} \ (\textrm{mod} \ 4)\] 
Now an argument similar to the one used to prove that $\alpha_r \equiv 2 \pmod 4$ shows that
\[\binom{m - 1}{h} \equiv 2 \pmod 4.\]
Therefore,
\[\sum\limits_{i = 0}^{h}\binom{m}{i} \equiv 2 \ (\textrm{mod} \ 4).\]
This completes the proof of the lemma.
\end{proof}
	
\begin{proof}[Proof of Theorem \ref{max-sumset-thm7}]
The proof is by contradiction. If the inequality in Theorem \ref{max-sumset-thm7} is not true, then it follows from Theorem \ref{max-sumset-thm4} that 
\[\nu^\wedge(\mathbb{Z}_n, m, [0, h]) = \min\Biggl\{n, \sum_{i=0}^{h}\binom{m}{i}\Biggl\} = n.\] 
Therefore, there exists a set $A \subseteq \mathbb{Z}_n$ with $m$ elements, say $A = \{a_1, \ldots, a_m\}$, such that $[0, h]^\wedge A = \mathbb{Z}_n$. Hence
\begin{equation}\label{max-sumset-thm7-eq1}
\sum_{a\in [0, h]^\wedge A}a \equiv \sum_{a\in\mathbb{Z}_n}a \equiv \frac{n(n + 1)}{2} \pmod n.
\end{equation}
Since 
\[|[0, h]^\wedge A| = n = \sum_{i=0}^{h}\binom{m}{i},\]
it follows that $|i^\wedge A| = \binom{m}{i}$ for each $i\in [0, h]$, for if $|i^\wedge A| < \binom{m}{i}$ for some $i$, then 
\[|[0, h]^\wedge A| \leq \sum_{s=0}^{h}|s^\wedge A| < \sum_{i=0}^{h}\binom{m}{i} = n,\]
which is a contradiction. Not also that if $i, j \in [0, h]$ such that $i \neq j$, then $i^\wedge A$ and $j^\wedge A$ are disjoint. Let $x = a_1 + \cdots + a_m$. It follows from Lemma \ref{max-sumset-lem5} that
\[\sum_{a\in i^\wedge A}a =  \binom{m - 1}{i - 1}x.\]
Hence
\begin{equation}\label{max-sumset-thm7-eq2}
 \sum_{a\in [0, h]^\wedge A} a = \sum_{i=0}^{h}\sum_{a\in i^\wedge A}a = \sum\limits_{i = 0}^{h}\binom{m - 1}{i - 1}x.
\end{equation}
If $m = 2^{\lfloor\log_2 h\rfloor}(4k + 3)$, then it follows from Lemma \ref{max-sumset-lem6} that
\[\sum\limits_{i = 0}^{h}\binom{m}{i} \equiv 0 \ (\textrm{mod} \ 2),\] 
and so $n \equiv 0 \pmod 2$. Therefore, it follows from \eqref{max-sumset-thm7-eq1} and \eqref{max-sumset-thm7-eq2} that
\[\sum\limits_{i = 0}^{h}\binom{m - 1}{i - 1}x \equiv \frac{n}{2} \ (\textrm{mod} \ n).\]
Now an application of Lemma \ref{max-sumset-lem6} gives
\[\frac{n}{2}  \equiv 0 \pmod 2,\] 
and so
\[n \equiv 0 \pmod 4.\] 
Therefore,
\[\sum_{i = 0}^{h}\binom{m}{i} \equiv 0 \pmod 4\] 
which contradict the second congruence in Lemma \ref{max-sumset-lem6}. Therefore, the inequality in Theorem \ref{max-sumset-thm7} must be true. This completes the proof.
\end{proof}


\section*{Acknowledgment}
The research of the first named author is supported by the UGC Fellowship (NTA Ref. No.: 231610040283).

\end{document}